\title{Divisibility of generalized Mersenne numbers}
\author{Alex Chan}
\affil{alexandr13s98765@gmail.com}
\date{2025}
\theoremstyle{plain}
\newtheorem*{theorem}{Theorem}
\theoremstyle{remark}
\newtheorem*{remark}{Remark}
\newtheorem*{proof2}{Proof that $(1)\Rightarrow(2)$ from an anonymous reviewer}
\newtheorem*{proof3}{Alternative proof that $(1)\Rightarrow m $ is divisible by $k$}
\newtheorem*{introduction}{Introduction}
\newtheorem*{case}{Special case of the theorem}
\newcommand{\lcm}{\mathop\text{\rm lcm}}
\newcommand{\ord}{\mathop\text{\rm ord}}
\begin{document}

\maketitle

\begin{introduction}
In this article, I present a theorem determining a criterion for divisibility of two generalized Mersenne numbers, which are repunits of the same length in base-$a^m$ and base-$a^k$. In addition to the  general proof, I present an alternative proof for a special case of the theorem.

In this paper $d, m, k, a$ are any positive integers, $d > 1, a>1$.

There are several interpretations of the term "generalized Mersenne number". 

In this work, I define generalized Mersenne number $M_d(x) := \sum_{j=0}^{d-1}x^j$.

\smallskip
\textbf{Acknowledgments.} The author thanks Prof. N. Osipov, A. Kalmynin and Prof. A. Skopenkov for their continued support throughout the different stages of the project.

\end{introduction}

\begin{theorem}
The following conditions are equivalent.\\ (1) The number $M_d(a^m)$ is divisible by the number $M_d(a^k)$. \\ (2) $m$ is divisible by $k$ and $\gcd(\frac{m}{k};d)=1$.
\end{theorem}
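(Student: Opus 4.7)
The plan is to translate the divisibility into a factorization question via cyclotomic polynomials. Writing
\[ M_d(a^n) \;=\; \frac{a^{nd}-1}{a^n-1} \;=\; \prod_{e \mid nd,\ e \nmid n} \Phi_e(a), \]
set $S_n := \{e : e \mid nd,\ e \nmid n\}$; the goal becomes to understand when $\prod_{e \in S_k}\Phi_e(a)$ divides $\prod_{e \in S_m}\Phi_e(a)$ in $\mathbb{Z}$.

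For $(2)\Rightarrow(1)$, assume $k \mid m$ with $m = kq$ and $\gcd(q,d)=1$, and check $S_k \subseteq S_m$: if $e \in S_k$ then $e \mid kd \mid md$, and if in addition $e \mid m = kq$ then $e \mid \gcd(kd,kq) = k\gcd(d,q) = k$, contradicting $e \nmid k$. Since distinct $\Phi_e$ are monic and pairwise coprime in $\mathbb{Z}[x]$, the quotient $\prod_{e\in S_m\setminus S_k}\Phi_e(a)$ lies in $\mathbb{Z}[x]$, yielding integer divisibility.

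For $(1)\Rightarrow(2)$, I proceed in two steps. First, $k \mid m$: combining $M_d(a^k) \mid a^{kd}-1$ with $M_d(a^k) \mid M_d(a^m) \mid a^{md}-1$ gives $M_d(a^k) \mid \gcd(a^{kd}-1,a^{md}-1) = a^{d\gcd(k,m)}-1$, yet $M_d(a^k) > a^{k(d-1)}$, so $k(d-1) < d\gcd(k,m)$; if $\gcd(k,m) < k$ then $\gcd(k,m) \leq k/2$ and we would get $d<2$, contradicting $d \geq 2$. Write $m=kq$ and suppose, for contradiction, that a prime $p$ divides both $q$ and $d$. Put $r := p^{v_p(k)+1}$; a direct check gives $r \mid kd$, $r \nmid k$, $r \mid m$. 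By Zsygmondy's theorem there is a prime $\ell$ with $\ord_\ell(a) = r$, and since $r \mid \ell-1$ while $p \mid r$ we must have $\ell \neq p$ (moreover $\ell$ is odd, as $\ord_2(a) \leq 1 < r$). Lifting-the-exponent (using $v_\ell(a^k-1)=0$ because $r \nmid k$) then gives $v_\ell(M_d(a^k)) = v_\ell(a^r-1)+v_\ell(kd/r)$, while $r \mid m$ yields $v_\ell(M_d(a^m))=v_\ell(d)$; subtracting, the difference simplifies to $v_\ell(a^r-1)+v_\ell(k) \geq 1$, contradicting $M_d(a^k) \mid M_d(a^m)$.

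The hard part will be the Zsygmondy exception at $r=2$, which occurs precisely when $p=2$, $k$ is odd, and $a+1 = 2^t$ with $t \geq 2$. In that case no odd prime has order $2$ modulo $a$, so I resort to the $2$-adic LTE: a direct computation (exploiting that $kd$ and $md$ are even while $k$ is odd) yields $v_2(M_d(a^k)) = v_2(d)+t-1$ and $v_2(M_d(a^m)) = v_2(d)$, whose difference $t-1 \geq 1$ supplies the needed contradiction. The other Zsygmondy exception $r=6$, $a=2$ cannot arise, since $r = p^{v_p(k)+1}$ is always a prime power.
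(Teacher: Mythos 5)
Your proof is correct: I checked the containment $S_k\subseteq S_m$, the inequality $k(d-1)<d\gcd(k,m)$, the choice $r=p^{v_p(k)+1}$ (which is a prime power dividing $kd$ and $m$ but not $k$), and both LTE computations, and found no gaps. It is, however, a genuinely different route from the paper's at all three stages. For $(2)\Rightarrow(1)$ the paper writes the quotient as $(b^{nd}-1)/\lcm(b^n-1,b^d-1)$ using $\gcd(b^n-1,b^d-1)=b^{\gcd(n,d)}-1$; your cyclotomic bookkeeping proves the same thing and has the bonus of giving the polynomial divisibility $M_d(x^k)\mid M_d(x^m)$ in $\mathbb{Z}[x]$ for free (the paper's Remark 1) — though you should say the quotient is a product of the integers $\Phi_e(a)$, $e\in S_m\setminus S_k$, rather than that it ``lies in $\mathbb{Z}[x]$''. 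For $k\mid m$ the paper's main proof invokes Zsigmondy on $a^{kd}-1$ and must dispose of the exceptional cases $kd=2$ and $kd=6$, $a=2$ by hand; your argument via $M_d(a^k)\mid\gcd(a^{kd}-1,a^{md}-1)=a^{d\gcd(k,m)}-1$ combined with the lower bound $M_d(a^k)>a^{k(d-1)}$ is Zsigmondy-free and case-free, and is close in spirit to the two alternative proofs appended to the paper (the reviewer's size estimate and the order argument). For $\gcd(m/k,d)=1$ the paper's main proof is an elementary congruence computation, $M_d(b^n)\equiv lM_{d_1}(b^l)\pmod{M_d(b)}$ plus a size estimate, while yours follows the strategy of the paper's Remark 2 (Zsigmondy plus LTE) with a nice twist: applying Zsigmondy to $a^r-1$ for the prime power $r=p^{v_p(k)+1}$, rather than to $b^q-1$ with $b=a^k$ as in the Remark, automatically excludes the $r=6$ exception and leaves only the $r=2$, $a+1=2^t$ case, which your $2$-adic computation ($v_2(M_d(a^k))-v_2(M_d(a^m))=t-1\ge 1$) settles correctly. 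The trade-off: the paper's coprimality step needs no machinery beyond a direct congruence, whereas yours needs Zsigmondy and LTE; conversely, your treatment of $k\mid m$ and of the forward direction is cleaner and more uniform than the paper's.
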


\begin{proof}
Define
\[
Q:=\frac{M_d(a^m)}{M_d(a^k)}
\]

\smallskip
\emph{Proof that $(1)\Leftarrow(2)$.} Suppose that $m$ is divisible by $k$ and $\gcd{(\frac{m}{k},d)}=1$. Let $n=\frac{m}{k}$ so that $\gcd{(n,d)}=1$. The corresponding quotient can be written as
\[
Q=\frac{(a^{md}-1)(a^k-1)}{(a^m-1)(a^{kd}-1)}=
\frac{(b^{nd}-1)(b-1)}{(b^n-1)(b^d-1)}
\]
where $b=a^k \ge 2$. We have $\gcd{(b^n-1,b^d-1)}=b^{\gcd{(n,d)}}-1=b-1$. Thus,
\[
Q=\frac{b^{nd}-1}{\lcm(b^n-1,b^d-1)}
\]
and $Q$ is an integer since $b^{nd}-1$ is a common multiple of $b^n-1$ and $b^d-1$.

\smallskip
\emph{Proof that $(1)\Rightarrow(2)$.} Suppose the quotient
\[
Q=\frac{(a^{md}-1)(a^k-1)}{(a^m-1)(a^{kd}-1)}
\]
to be an integer. We now prove that $m$ is divisible by $k$ and that $\gcd(\frac{m}{k};d)=1$ separately.

\smallskip
\emph{Proof that (1) $\Rightarrow$ $m$ is divisible by $k$.} Using the so-called \emph{Zsigmondy's theorem} (see \cite[Theorem V]{Birkhoff&Vandiver-1904} or the original Zsigmondy's paper \cite{Zsigmondy-1892}), show that $m$ is divisible by $k$.

\smallskip
\emph{Case when $kd=2$.} If $kd=2$ then $k=1$ and the claim is trivially true.

\smallskip
\emph{Case when $kd=6$ and $a=2$.} If $kd=6$ and $a=2$, then either $k=1$ (then $m$ is divisible by $k$) or $(k,d) \in \{(2,3),(3,2)\}$. In the case $(k,d)=(2,3)$ the quotient
\[
Q=\frac{2^{2m}+2^m+1}{21}
\]
is an integer only for $m=2n$ with $n$ not divisible by $3$. In the case $(k,d)=(3,2)$ the quotient
\[
Q=\frac{2^m+1}{9}
\]
is an integer only for $m=3n$ with $n$ odd. Consequently, in both cases $m$ is divisible by $k$.

\smallskip
\emph{All other cases.} In remaining cases, by Zsigmondy's theorem there exists a prime divisor $p$ of $a^{kd}-1$ such that $\ord_p{(a)}=kd$. Due to $kd>k$ we conclude that $a^k \not\equiv 1 \pmod{p}$. Since $Q$ is an integer, we obtain $a^{md} \equiv 1 \pmod{p}$. Hence $md$ is divisible by $kd$ or, equivalently, $m$ is divisible by $k$.

\smallskip
\emph{Proof that (1) $\Rightarrow$ $\gcd(\frac{m}{k};d)=1$.} Letting $n=\frac{m}{k}$ again, we get
\[
Q=\frac{M_d(b^n)}{M_d(b)}
\]
with $b=a^k \ge 2$. Assuming $Q$ to be an integer, we need to prove that $\gcd{(n,d)}=1$.

Let $\gcd{(n,d)}=l$ and $n_1=n/l$, $d_1=d/l$. Then
\begin{equation}
\label{eq-1}
M_d(b^n) \equiv lM_{d_1}(b^l) \pmod{M_d(b)}
\end{equation}
as we will see later. Also, we have $M_d(b)=M_{d_1}(b^l)M_l(b)$ so that
\[
0<lM_{d_1}(b^l)=\frac{l}{M_l(b)}M_d(b)<M_d(b)
\]
for all $b \ge 2$ and $l>1$. Thus, $M_d(b^n)$ cannot be divisible by $M_d(b)$ for $l>1$. Therefore, $l=1$.

It remains to prove (1). We use the following well-known fact: for any $N$ divisible by $d$, the congruence $b^N \equiv 1 \pmod{M_d(b)}$ holds. We have
\[
M_d(b^n)=\sum_{i=0}^{d-1}b^{ni}=\sum_{q=0}^{l-1}\sum_{r=0}^{d_1-1}b^{ln_1(d_1q+r)} \equiv 
\sum_{q=0}^{l-1}\sum_{r=0}^{d_1-1}b^{ln_1r}=l\sum_{r=0}^{d_1-1}b^{ln_1r}=\\
\]
\[
=l\sum_{r=0}^{d_1-1}b^{l(j_r+d_1q_r)} \equiv l\sum_{r=0}^{d_1-1}b^{lj_r}=
l\sum_{r=0}^{d_1-1}b^{lr}=lM_{d_1}(b^l) \pmod{M_d(b)}.
\]
Here $n_1r=j_r+d_1q_r$ for some integers $j_r, q_r$ ($0 \le j_r<d_1$) such that
\[
\{j_0,j_1,\dots,j_{d_1-1}\}=\{0,1,\dots,d_1-1\}
\]
due to $\gcd{(n_1,d_1)}=1$.

In the ring $\mathbb{Z}[x]$, the congruence (1) rewritten as
\[
M_d(x^n) \equiv lM_{d_1}(x^l) \pmod{M_d(x)}
\]
holds too (and follows the same proof).
\end{proof}

\begin{remark}

\hspace*{\parindent} 1. $M_d(x^m)$ is divisible by $M_d(x^k)$ in the ring of polynomials $\mathbb{Z}[x]$ if and only if $m$, $k$, $d$ satisfy the same condition. The proof of the claim ``if'' almost exactly follows one given in part I. At the same time, the proof of the claim ``only if'' is simplified by considering the complex roots of polynomials with their multiplicities instead of Zsigmondy's theorem.

2. Proof that (1) $\Rightarrow$ $\gcd(\frac{m}{k};d)=1$ can be performed via Zsigmondy's theorem and the well-known LTE-lemma (see \cite{LTE}). Rewrite $Q=M_d(b^n)/M_d(b)$ in the form
\[
Q=\frac{(b^{nd}-1)(b-1)}{(b^n-1)(b^d-1)}.
\]
Assume $Q$ to be an integer while $\gcd{(n,d)}>1$. Let $q$ be a common prime divisor of $n$ and $d$. Let $n_1=n/q$ and $d_1=d/q$.

\smallskip
\emph{Case when $q \ge 3$.} By Zsigmondy's theorem, there is a prime divisor $p$ of $b^q-1$ such that $\ord_p{(b)}=q$. Then $b \not\equiv 1 \pmod{p}$. As a corollary we get $p \neq q$ and $p \neq 2$. Let $c=b^q$. We have
\[
\nu_p((b^n-1)(b^d-1))=\nu_p((c^{n_1}-1)(c^{d_1}-1))=
\]
\[
=\nu_p(c^{n_1}-1)+\nu_p(c^{d_1}-1)=2\nu_p(c-1)+\nu_p(n_1)+\nu_p(d_1).
\]
while
\[
\nu_p((b^{nd}-1)(b-1))=\nu_p(b^{nd}-1)=\nu_p(c^{qn_1d_1}-1)=
\]
\[
=\nu_p(c-1)+\nu_p(qn_1d_1)=\nu_p(c-1)+\nu_p(n_1)+\nu_p(d_1).
\]
Thus, $\nu_p((b^n-1)(b^d-1))>\nu_p((b^{nd}-1)(b-1))$ and $Q$ cannot be an integer. Contradiction.

\smallskip
\emph{Case when $q=2$ and $b+1 \neq 2^t$.} In this case $b+1$ has a prime divisor $p \neq 2$. Clearly, $b \not\equiv 1 \pmod{p}$. We can get a contradiction in the same way as above.

\smallskip
\emph{Case when $q=2$ and $b=2^t-1$ where $t \geqslant 2$.} Here, we have $n=2n_1$, $d=2d_1$. Then
\[
\nu_2(b^n-1)=\nu_2(b^2-1)-1+\nu_2(n)=t+\nu_2(n)=t+1+\nu_2(n_1)
\]
and similarly
\[
\nu_2(b^d-1)=t+1+\nu_2(d_1), \quad \nu_2(b^{nd}-1)=t+2+\nu_2(n_1)+\nu_2(d_1).
\]
Thus, we obtain
\[
\nu_2((b^n-1)(b^d-1))=2t+2+\nu_2(n_1)+\nu_2(d_1)>
\]
\[
>t+3+\nu_2(n_1)+\nu_2(d_1)=\nu_2((b^{nd}-1)(b-1))
\]
and $Q$ cannot be an integer again. Contradiction.

Hence, $\gcd(n, d) = 1$.
\end{remark}

\begin{proof2}
\emph{Proof that (1) $\Rightarrow$ $m$ is divisible by $k$.} Suppose that the quotient
\[
Q'=\frac{(a^{md}-1)(a^k-1)}{a^{kd}-1}
\]
is an integer. Then $m$ must be divisible by $k$. Indeed, let
\begin{gather*}
D=\gcd{(a^{md}-1,a^{kd}-1)}=a^{d\gcd{(m,k)}}-1,\\
a^{md}-1=DA', \quad a^{kd}-1=DA''
\end{gather*}
for some $A'$ and $A''$ such that $\gcd{(A',A'')}=1$. Then $DA'(a^k-1)=DA''Q'$. Hence,
\[
A'(a^k-1)=A''Q'.
\]
Then $A'(a^k-1)$ is divisible by $A''$.
Hence $A''$ is a divisor of $a^k-1$ because $\gcd{(A',A'')}=1$.
Thus, $DA''$ is a divisor of $D(a^k-1)$, i.e., $a^{kd}-1$ is a divisor of $(a^{d\gcd{(m,k)}}-1)(a^k-1)$. In particular,
\[
a^{kd}-1 \leqslant (a^{d\gcd{(m,k)}}-1)(a^k-1)<a^{d\gcd{(m,k)}+k}-1
\]
that yields $kd<d\gcd{(m,k)}+k$ or
\[
\frac{k}{\gcd{(m,k)}}<\frac{d}{d-1} \leqslant 2
\]
(recall that $d \geqslant 2$). Then $k=\gcd{(m,k)}$ so that $m$ is divisible by $k$.

\smallskip
\emph{Proof that (1) $\Rightarrow$ $\gcd(\frac{m}{k};d)=1$.} Let $n=m/k$ and assume the number
\[
\frac{(b^{nd}-1)(b-1)}{(b^n-1)(b^d-1)}=\frac{b^{nd}-1}{b^n-1}:\frac{b^d-1}{b-1}=
\frac{b^{nd}-1}{b^d-1}:\frac{b^n-1}{b-1}
\]
to be an integer (where $b=a^k \geqslant 2$). Let us introduce
\[
M:=\frac{b^{\gcd{(n,d)}}-1}{b-1}.
\]
Clearly, we have
\[
\frac{b^d-1}{b-1} \equiv \frac{b^n-1}{b-1} \equiv 0 \pmod{M}.
\]
As a corollary of our assumption, we get
\begin{gather*}
0 \equiv \frac{b^{nd}-1}{b^n-1}=1+b^n+\ldots+b^{n(d-1)} \equiv d \pmod{M},\\
0 \equiv \frac{b^{nd}-1}{b^d-1}=1+b^d+\ldots+b^{d(n-1)} \equiv n \pmod{M}.
\end{gather*}
Therefore, $\gcd{(n,d)}$ is divisible by $M$. In the case $\gcd{(n,d)}>1$ we obtain
\[
\gcd{(n,d)}<2^{\gcd{(n,d)}}-1 \leqslant M
\]
which is a contradiction. Thus, $\gcd{(n,d)}=1$ as desired.
\end{proof2}

\begin{proof3}
Define $g = \ord_{M_d(a^k)}{(a)}$.
We now prove that $g = kd$. Indeed, $a^{kd}-1$ is divisible by $M_d(a^k)$, so $kd$ is divisible by $g$. On the other hand, $M_d(a^k) > a^{k(d-1)}$, so $g > k(d-1)$. Thus, $g = kd$. Hence, (1) $\Rightarrow a^{md} - 1$ is divisible by $M_d(a^k) \Rightarrow m$ is divisible by $k$.
\end{proof3}

\begin{case}
\textbf{Proposition.}
\emph{The following conditions are equivalent.\\ (1) The number $M_d(n^m)$ is divisible by the number $M_d(n^k)$ for any integer $n$. \\ (2) $m$ is divisible by $k$ and $\gcd(\frac{m}{k};d)=1$.}

\smallskip
This result could be known. 
I learned it from \cite{Sy}. 
I. Syukrin presented an idea of a technical proof using canonical decomposition of numbers. 
Here I present a simple proof using roots of polynomials. 

(3) \emph{The polynomial $M_d(x^m)$ is divisible in $\mathbb{Z}[x]$ by the polynomial $M_d(x^k)$.}

Equivalence of (3) and (1) follows from \cite[Problem 3.4.2.b]{Sk} (there the assumption that $P$ does not have roots is superfluous).

\smallskip 
We now prove that both (3) and (2) are equivalent to the following statement.  
 
(4) \emph{The polynomial $(x^{md}-1)(x^k-1)$ is divisible in $\mathbb{Z}[x]$ by the polynomial $(x^m-1)(x^{kd}-1)$.}

We prove the equivalences 
$$(2)\Leftrightarrow(4)\Leftrightarrow(3).$$ 
Equivalence of (3) and (4) is obvious.

\smallskip
\emph{Proof that $(2)\Rightarrow(4)$.} 
This statement follows the same proof as the proof that (1) $\Leftarrow$ (2) in the Theorem.

\smallskip
\emph{Proof that $(2)\Leftarrow(4)$.} 
Since $(x^{md}-1)(x^k-1)$ is divisible in $\mathbb{Z}[x]$ by the polynomial $(x^m-1)(x^{kd}-1)$ and the value of $(x^m-1)(x^{kd}-1)$ for $x_1:=e^{2\pi i /kd}$ is 0, $e^{2\pi i /kd}$ is a root of both the numerator and denominator.
Thus, $(x_1^{md}-1) = 0$, because $(x_1^k-1)\not=0$. 
Hence, $md$ is divisible by $kd$. 
So, $m$ is divisible by $k$.

\smallskip
If $\gcd(u, d) > 1$, then $g(y) := (y^u - 1)(y^d - 1)$ has multiple roots other than 1, while $f(y) := (y^{ud} - 1)(y - 1)$ has no such roots. Thus, $f(y)$ is not divisible by $g(y)$ in $\mathbb{Z}[y]$. Then, by the following well-known assertion, $f(x^k)$ is not divisible by $g(x^k)$. 
Hence, $\gcd (u, d) = 1$.

So (2) holds.

\smallskip
\textbf{Assertion.}
\emph{Let $k$ be an integer and and $f(y), g(y) \in \mathbb{Z}[y]$. If $f(x^k)$ is divisible by $g(x^k)$ in $\mathbb{Z}[x]$, then $f(y)$ is divisible by $g(y)$ in $\mathbb{Z}[y]$.}
\end{case}

\end{document}